\newtheorem{theorem}{Theorem}[section]
\newtheorem{lemma}[theorem]{Lemma}
\newtheorem{proposition}[theorem]{Proposition}
\newtheorem{corollary}[theorem]{Corollary}
\theoremstyle{definition}
\newtheorem{definition}[theorem]{Definition}
\newtheorem{example}[theorem]{Example}
\numberwithin{equation}{section}
\newcommand{\be}{\begin{equation}}
\newcommand{\ee}{\end{equation}}
\newcommand{\bee}{\begin{example}}
\newcommand{\eee}{\end{example}}
\numberwithin{equation}{section}
\patchcmd{\@settitle}{\uppercasenonmath\@title}{}{}{}
\patchcmd{\@setauthors}{\MakeUppercase}{}{}{}
\begin{document}

\title[ Controlled K-operator frame for $End_\mathcal{A}^\ast (\mathcal{H})$]{Controlled K-operator frame for $End_\mathcal{A}^\ast (\mathcal{H})$}

\author[Abdeslam TOURI and Samir KABBAJ]{Abdeslam TOURI$^1$$^{*}$\MakeLowercase{and} Samir KABBAJ$^1$}

\address{$^{1}$Department of Mathematics, University of Ibn Tofail, B.P. 133, Kenitra, Morocco}
\email{\textcolor[rgb]{0.00,0.00,0.84}{touri.abdo68@gmail.com}}
\email{\textcolor[rgb]{0.00,0.00,0.84}{samkabbaj@yahoo.fr}}


\subjclass[2010]{41A58, 42C15}


\keywords{Operator Frame, Controlled operator frame, K-operator frame, Controlled K-operator frame    $C^{\ast}$-algebra, Hilbert $\mathcal{A}$-modules.\\
\indent $^{*}$ Corresponding author}
\maketitle

\begin{abstract}
	Frame Theory has a great revolution for recent years. This theory has been extended from Hilbert spaces to Hilbert  $C^{\ast}$-modules. In this paper, we introduce the concept of Controlled K-operator frame for the space $End_{\mathcal{A}}^{\ast}(\mathcal{H})$ of all adjointable operators on a Hilbert $\mathcal{A}$-module $\mathcal{H}$ and we establish some results.

\end{abstract}
\maketitle
\vspace{0.1in}

\section{\textbf{Introduction and preliminaries}}
The concept of frames in Hilbert spaces has been introduced by
Duffin and Schaeffer \cite{Duf} in 1952 to study some deep problems in nonharmonic Fourier series. After the fundamental paper \cite{13} by Daubechies, Grossman and Meyer, frame theory began to be widely used, particularly in the more specialized context of wavelet frames and Gabor frames \cite{Gab}. Frames have been used in signal processing, image processing, data compression and sampling theory. 
The concept of a generalization of frames to a family indexed by some locally compact space endowed with a Radon measure was proposed by G. Kaiser \cite{15} and independently by Ali, Antoine and Gazeau \cite{11}. These frames are known as continuous frames. Gabardo and Han in \cite{14} called these frames associated with measurable spaces, Askari-Hemmat, Dehghan and Radjabalipour in \cite{12} called them generalized frames and in mathematical physics they are referred to as coherent states \cite{11}. 
In 2012, L. Gavruta \cite{02} introduced the notion of K-frames in Hilbert space to study the atomic systems with respect to a bounded linear operator K. Controlled frames in Hilbert spaces have been introduced by P. Balazs \cite{01} to improve the numerical efficiency of iterative algorithms for inverting the frame operator. Rahimi \cite{05} defined the concept of controlled K-frames in Hilbert spaces and showed that controlled K-frames are equivalent to K-frames due to which the controlled operator C  can be used as preconditions in applications.
Controlled frames in $C^{\ast}$-modules were introduced by Rashidi and Rahimi \cite{03}, and the authors showed that they share many useful properties with their corresponding notions in a Hilbert space. K-operator frmae for $End_{\mathcal{A}}^{\ast}(\mathcal{H})$ has been study by M. Rossafi \cite{art1}.
Motivated by the above literature, we introduce the notion of a  controlled K-operator frame for Hilbert $C^{\ast}$-modules.

In the following we briefly recall the definitions and basic properties of $C^{\ast}$-algebra, Hilbert $\mathcal{A}$-modules. Our references for $C^{\ast}$-algebras as \cite{{Dav},{Con}}. For a $C^{\ast}$-algebra $\mathcal{A}$ if $a\in\mathcal{A}$ is positive we write $a\geq 0$ and $\mathcal{A}^{+}$ denotes the set of all positive elements of $\mathcal{A}$.
\begin{definition}\cite{Pas}	
	Let $ \mathcal{A} $ be a unital $C^{\ast}$-algebra and $\mathcal{H}$ be a left $ \mathcal{A} $-module, such that the linear structures of $\mathcal{A}$ and $ \mathcal{H} $ are compatible. $\mathcal{H}$ is a pre-Hilbert $\mathcal{A}$-module if $\mathcal{H}$ is equipped with an $\mathcal{A}$-valued inner product $\langle.,.\rangle_{\mathcal{A}} :\mathcal{H}\times\mathcal{H}\rightarrow\mathcal{A}$, such that is sesquilinear, positive definite and respects the module action. In the other words,
	\begin{itemize}
		\item [(i)] $ \langle x,x\rangle_{\mathcal{A}}\geq0 $ for all $ x\in\mathcal{H} $ and $ \langle x,x\rangle_{\mathcal{A}}=0$ if and only if $x=0$.
		\item [(ii)] $\langle ax+y,z\rangle_{\mathcal{A}}=a\langle x,z\rangle_{\mathcal{A}}+\langle y,z\rangle_{\mathcal{A}}$ for all $a\in\mathcal{A}$ and $x,y,z\in\mathcal{H}$.
		\item[(iii)] $ \langle x,y\rangle_{\mathcal{A}}=\langle y,x\rangle_{\mathcal{A}}^{\ast} $ for all $x,y\in\mathcal{H}$.
	\end{itemize}	 
	For $x\in\mathcal{H}, $ we define $||x||=||\langle x,x\rangle_{\mathcal{A}}||^{\frac{1}{2}}$. If $\mathcal{H}$ is complete with $||.||$, it is called a Hilbert $\mathcal{A}$-module or a Hilbert $C^{\ast}$-module over $\mathcal{A}$. For every $a$ in $C^{\ast}$-algebra $\mathcal{A}$, we have $|a|=(a^{\ast}a)^{\frac{1}{2}}$ and the $\mathcal{A}$-valued norm on $\mathcal{H}$ is defined by $|x|=\langle x, x\rangle_{\mathcal{A}}^{\frac{1}{2}}$ for $x\in\mathcal{H}$.
	
	Let $\mathcal{H}$ and $\mathcal{K}$ be two Hilbert $\mathcal{A}$-modules, A map $T:\mathcal{H}\rightarrow\mathcal{K}$ is said to be adjointable if there exists a map $T^{\ast}:\mathcal{K}\rightarrow\mathcal{H}$ such that $\langle Tx,y\rangle_{\mathcal{A}}=\langle x,T^{\ast}y\rangle_{\mathcal{A}}$ for all $x\in\mathcal{H}$ and $y\in\mathcal{K}$.
	
We reserve the notation $End_{\mathcal{A}}^{\ast}(\mathcal{H},\mathcal{K})$ for the set of all adjointable operators from $\mathcal{H}$ to $\mathcal{K}$ and $End_{\mathcal{A}}^{\ast}(\mathcal{H},\mathcal{H})$ is abbreviated to $End_{\mathcal{A}}^{\ast}(\mathcal{H})$.
\end{definition}  
The following lemmas will be used to prove our mains results.
\begin{lemma} \label{111} \cite{Ara}.
	Let $\mathcal{H}$ and $\mathcal{K}$ be two Hilbert $\mathcal{A}$-modules and $T\in End_{\mathcal{A}}^{\ast}(\mathcal{H},\mathcal{K})$. Then the following statements are equivalent:
	\begin{itemize}
		\item [(i)] $T$ is surjective.
		\item [(ii)] $T^{\ast}$ is bounded below with respect to norm, i.e, there is $m>0$ such that $\|T^{\ast}x\|\geq m\|x\|$, $x\in\mathcal{K}$.
		\item [(iii)] $T^{\ast}$ is bounded below with respect to the inner product, i.e, there is $m'>0$ such that, $$\langle T^{\ast}x,T^{\ast}x\rangle_\mathcal{A}\geq m'\langle x,x\rangle_\mathcal{A} , x\in\mathcal{K}$$.
	\end{itemize}
\end{lemma}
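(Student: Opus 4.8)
The plan is to prove the cycle of implications $(i)\Rightarrow(iii)\Rightarrow(ii)\Rightarrow(i)$; two of these are routine $C^{\ast}$-algebra computations, while the other two draw on the closed-range theory for adjointable operators between Hilbert $\mathcal{A}$-modules (complementability of closed ranges, and the equivalence ``$\ran T$ closed $\iff$ $\ran T^{\ast}$ closed''). I would dispose of $(iii)\Rightarrow(ii)$ first: if $\langle T^{\ast}x,T^{\ast}x\rangle_{\mathcal{A}}\geq m'\langle x,x\rangle_{\mathcal{A}}$ for all $x\in\mathcal{K}$, then taking $C^{\ast}$-norms and using that $0\leq a\leq b$ implies $\|a\|\leq\|b\|$ gives $\|T^{\ast}x\|^{2}=\|\langle T^{\ast}x,T^{\ast}x\rangle_{\mathcal{A}}\|\geq m'\|x\|^{2}$, so $(ii)$ holds with $m=\sqrt{m'}$.

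For $(i)\Rightarrow(iii)$ I would work with the positive operator $TT^{\ast}\in End_{\mathcal{A}}^{\ast}(\mathcal{K})$. Surjectivity of $T$ says $\ran T=\mathcal{K}$ is closed, hence $\ran(TT^{\ast})=\ran T=\mathcal{K}$, while $\ker(TT^{\ast})=\ker T^{\ast}=(\ran T)^{\perp}=\{0\}$; thus $TT^{\ast}$ is a bijective adjointable operator, so it is invertible in $End_{\mathcal{A}}^{\ast}(\mathcal{K})$. Being positive and invertible, $TT^{\ast}\geq c\,\id$ with $c:=\|(TT^{\ast})^{-1}\|^{-1}>0$, and reading off the associated inner product inequality yields $\langle T^{\ast}x,T^{\ast}x\rangle_{\mathcal{A}}=\langle TT^{\ast}x,x\rangle_{\mathcal{A}}\geq c\langle x,x\rangle_{\mathcal{A}}$, which is $(iii)$.

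The implication $(ii)\Rightarrow(i)$ is the crux. From $\|T^{\ast}x\|\geq m\|x\|$ one sees at once that $T^{\ast}$ is injective and that $\ran T^{\ast}$ is closed: if $(T^{\ast}x_{n})$ converges, then $(x_{n})$ is Cauchy, hence convergent, and one passes to the limit. Invoking the closed-range theorem for Hilbert $C^{\ast}$-modules, closedness of $\ran T^{\ast}$ gives closedness of $\ran T$ together with the orthogonal decomposition $\mathcal{K}=\ran T\oplus\ker T^{\ast}$; since $\ker T^{\ast}=\{0\}$, this forces $\ran T=\mathcal{K}$, i.e. $T$ is surjective.

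The main obstacle is precisely this structural input: one cannot replace it by a soft density argument, because in a Hilbert $C^{\ast}$-module $M^{\perp}=\{0\}$ does not entail that the submodule $M$ is dense, so the closedness and orthogonal complementability of $\ran T$ are genuinely needed. The same point is why, in $(i)\Rightarrow(iii)$, I route through the invertibility of $TT^{\ast}$ rather than attempting to invert $T$ directly; all remaining steps are then elementary manipulations with positive elements and $C^{\ast}$-norms.
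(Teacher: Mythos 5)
Your proof is correct. The paper does not actually prove this lemma --- it is imported verbatim from Aramba\v{s}i\'{c}'s paper cited as the source --- and your argument (the cycle $(i)\Rightarrow(iii)\Rightarrow(ii)\Rightarrow(i)$, with the invertibility of the positive operator $TT^{\ast}$ and the closed-range/orthogonal-complementability theorem for adjointable operators supplying the genuinely module-theoretic input) is essentially the standard proof from that reference, including your correct observation that a density argument via $M^{\perp}=\{0\}$ is unavailable in Hilbert $C^{\ast}$-modules.
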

\begin{lemma} \label{1} \cite{Pas}.
	Let $\mathcal{H}$ be an Hilbert $\mathcal{A}$-module. If $T\in End_{\mathcal{A}}^{\ast}(\mathcal{H})$, then $$\langle Tx,Tx\rangle_{\mathcal{A}}\leq\|T\|^{2}\langle x,x\rangle_{\mathcal{A}}, \qquad x\in\mathcal{H}.$$
\end{lemma}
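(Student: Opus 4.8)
The plan is to reduce this $\mathcal{A}$-valued inequality to a statement about positivity inside the $C^{\ast}$-algebra $End_{\mathcal{A}}^{\ast}(\mathcal{H})$. First I would use that $T$ is adjointable to rewrite, for every $x\in\mathcal{H}$,
$$\langle Tx,Tx\rangle_{\mathcal{A}}=\langle T^{\ast}Tx,x\rangle_{\mathcal{A}},$$
so that the claim becomes $\langle T^{\ast}Tx,x\rangle_{\mathcal{A}}\leq\|T\|^{2}\langle x,x\rangle_{\mathcal{A}}$, i.e. $\langle(\|T\|^{2}\id-T^{\ast}T)x,x\rangle_{\mathcal{A}}\geq0$ for all $x\in\mathcal{H}$.

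Next I would recall that $End_{\mathcal{A}}^{\ast}(\mathcal{H})$ is itself a unital $C^{\ast}$-algebra and that $T^{\ast}T$ is a positive element of it with $\|T^{\ast}T\|=\|T\|^{2}$ by the $C^{\ast}$-identity. Then I would invoke the elementary spectral fact that a positive element $a$ of a unital $C^{\ast}$-algebra satisfies $\sigma(a)\subseteq[0,\|a\|]$, so that $\|a\|\id-a$ is self-adjoint with spectrum contained in $[0,\|a\|]\subseteq[0,\infty)$ and is therefore positive. Applying this with $a=T^{\ast}T$ shows that $R:=\|T\|^{2}\id-T^{\ast}T$ is a positive operator in $End_{\mathcal{A}}^{\ast}(\mathcal{H})$.

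Finally, since $R\geq0$ in $End_{\mathcal{A}}^{\ast}(\mathcal{H})$, it has a positive (hence self-adjoint) square root $R^{1/2}\in End_{\mathcal{A}}^{\ast}(\mathcal{H})$, and therefore for every $x\in\mathcal{H}$,
$$\langle Rx,x\rangle_{\mathcal{A}}=\langle R^{1/2}R^{1/2}x,x\rangle_{\mathcal{A}}=\langle R^{1/2}x,R^{1/2}x\rangle_{\mathcal{A}}\geq0$$
by positive-definiteness of the $\mathcal{A}$-valued inner product. Unwinding the definition of $R$ gives exactly $\langle Tx,Tx\rangle_{\mathcal{A}}\leq\|T\|^{2}\langle x,x\rangle_{\mathcal{A}}$.

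The one point I would be most careful about is the transition "positive in the $C^{\ast}$-algebra sense $\Rightarrow\ \langle\,\cdot\,x,x\rangle_{\mathcal{A}}\geq0$", together with the assertion $\|T\|^{2}\id-T^{\ast}T\geq0$: both rely on $End_{\mathcal{A}}^{\ast}(\mathcal{H})$ being a $C^{\ast}$-algebra and on the existence of square roots of positive elements there, which is standard (Lance, Paschke) but should be cited explicitly. Everything else is a routine manipulation of the inner product and the identity $\|T^{\ast}T\|=\|T\|^{2}$.
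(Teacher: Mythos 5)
Your argument is correct and complete: reducing to the positivity of $\|T\|^{2}\id-T^{\ast}T$ in the $C^{\ast}$-algebra $End_{\mathcal{A}}^{\ast}(\mathcal{H})$ and then factoring through the positive square root is exactly the standard proof of this inequality (it is essentially Paschke's original argument). The paper itself gives no proof, citing the result directly from \cite{Pas}, so there is nothing to contrast with; the only points you flag as needing citation --- that $End_{\mathcal{A}}^{\ast}(\mathcal{H})$ is a $C^{\ast}$-algebra and that positive elements admit positive square roots --- are indeed covered by that same reference.
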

For the following theorem, $R(T)$ denote the range of the operator $T$.
\begin{theorem}\label{3} \cite{Do} 
	Let E, F and G be Hilbert ${\mathcal{A}}$-modules over a $C^{\ast}$-algebra $\mathcal{A}$. Let $ T\in End_{\mathcal{A}}^{\ast}(E,F) $ and  $  T'\in End_{\mathcal{A}}^{\ast}(G,F) $ with 
    $\overline{({R(T^{\ast})})}$ is orthogonally complemented. Then the following statements are equivalent:
	\begin{itemize}
		\item [(1)] $T'(T')^{\ast} \leq \lambda TT^{\ast}$ for some $\lambda >0$.
		\item  [(2)]	There exists $\mu >0 $ such that $\|(T')^{\ast}x\|\leq \mu \|T^{\ast}x\|$ for all $x\in F$.
		\item  [(3)] There exists $ D\in End^{\ast}(G,E)$ such that $T'=TD$,
		that is the equation $TX=T'$ has a solution.
		\item  [(4)]  $R(T')\subseteq R(T)$.
		
	\end{itemize}
	
\end{theorem}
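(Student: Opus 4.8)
The plan is to establish the four equivalences through the cycle $(3)\Rightarrow(1)\Rightarrow(2)\Rightarrow(3)$ together with $(3)\Rightarrow(4)$ and $(4)\Rightarrow(2)$, following the pattern of Douglas' classical range-inclusion/majorization theorem transcribed to adjointable operators on Hilbert $C^{\ast}$-modules. The implications that do not involve constructing a solution operator are essentially formal; the substantive content lies in $(2)\Rightarrow(3)$ and $(4)\Rightarrow(2)$, where the hypothesis that $\overline{R(T^{\ast})}$ be orthogonally complemented is used in an essential way.

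For $(3)\Rightarrow(1)$: if $T'=TD$ with $D\in End_{\mathcal{A}}^{\ast}(G,E)$, then $T'(T')^{\ast}=TDD^{\ast}T^{\ast}$, and applying Lemma~\ref{1} to $D^{\ast}$ gives $DD^{\ast}\le\|D\|^{2}\id$; conjugating this inequality by $T$ (which preserves order among positive elements) yields $T'(T')^{\ast}\le\|D\|^{2}TT^{\ast}$, i.e. $(1)$ with $\lambda=\|D\|^{2}$. For $(1)\Rightarrow(2)$: the operator inequality $T'(T')^{\ast}\le\lambda TT^{\ast}$ in $End_{\mathcal{A}}^{\ast}(F)$ means $\langle(T')^{\ast}x,(T')^{\ast}x\rangle_{\mathcal{A}}\le\lambda\langle T^{\ast}x,T^{\ast}x\rangle_{\mathcal{A}}$ in $\mathcal{A}^{+}$ for every $x\in F$; since the $C^{\ast}$-norm is monotone on $\mathcal{A}^{+}$, taking norms and square roots gives $\|(T')^{\ast}x\|\le\sqrt{\lambda}\,\|T^{\ast}x\|$, which is $(2)$ with $\mu=\sqrt{\lambda}$. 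Finally $(3)\Rightarrow(4)$ is immediate, since $R(T')=R(TD)\subseteq R(T)$.

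The main step is $(2)\Rightarrow(3)$. On the submodule $R(T^{\ast})\subseteq E$ define $\Phi_{0}\colon R(T^{\ast})\to G$ by $\Phi_{0}(T^{\ast}x)=(T')^{\ast}x$. The estimate in $(2)$ shows at once that $\Phi_{0}$ is well defined (from $T^{\ast}x=T^{\ast}y$ one gets $(T')^{\ast}x=(T')^{\ast}y$) and bounded with $\|\Phi_{0}\|\le\mu$, and it is plainly $\mathcal{A}$-linear, so it extends by continuity to $\overline{R(T^{\ast})}$. Using that $\overline{R(T^{\ast})}$ is orthogonally complemented, decompose $E=\overline{R(T^{\ast})}\oplus\overline{R(T^{\ast})}^{\perp}$ and extend $\Phi_{0}$ by zero on the second summand, obtaining a bounded $\mathcal{A}$-linear map $\Phi\colon E\to G$ with $\Phi T^{\ast}=(T')^{\ast}$. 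It remains to upgrade $\Phi$ to an adjointable operator: one verifies, using that $\Phi$ annihilates $\overline{R(T^{\ast})}^{\perp}$ and that the identity $\Phi T^{\ast}=(T')^{\ast}$ forces any candidate adjoint $g\mapsto\Phi^{\ast}g$ to satisfy $T\Phi^{\ast}g=T'g$, that $\Phi^{\ast}$ exists; setting $D=\Phi^{\ast}\in End_{\mathcal{A}}^{\ast}(G,E)$ and taking adjoints in $\Phi T^{\ast}=(T')^{\ast}$ gives $TD=T'$. For $(4)\Rightarrow(2)$ one argues in the same spirit: if $R(T')\subseteq R(T)$, then for each $g\in G$ there is a unique preimage of $T'g$ under $T$ lying in $\overline{R(T^{\ast})}$ (complementedness again), a closed-graph argument shows the resulting map $D$ is bounded and $\mathcal{A}$-linear, adjointability is obtained as before, and this gives $(3)$, hence $(2)$ by the implications already proved, closing the cycle.

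The hard part is precisely the adjointability issue: in a Hilbert $C^{\ast}$-module a bounded module map need not possess an adjoint, so the Hilbert-space Douglas recipe --- ``define the solution on a subspace, extend by continuity and then by zero'' --- does not by itself land in $End_{\mathcal{A}}^{\ast}(G,E)$. The role of the orthogonal complementedness of $\overline{R(T^{\ast})}$ is exactly to repair this: it makes the zero-extension well defined as a module map and, combined with the defining identity $\Phi T^{\ast}=(T')^{\ast}$, pins down the adjoint. A secondary point requiring care is the passage in $(1)\Leftrightarrow(2)$ between inequalities among positive elements of $\mathcal{A}$ and scalar norm inequalities, which rests only on monotonicity of the $C^{\ast}$-norm on $\mathcal{A}^{+}$ and on no stronger order-theoretic property.
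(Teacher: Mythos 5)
The paper does not prove this statement: it is quoted from the reference \cite{Do} (Fang, Yu and Yao, \emph{Solutions to operator equations on Hilbert $C^{*}$-modules}), so there is no internal proof to compare yours against. Judged on its own terms, your argument is fine for the formal implications $(3)\Rightarrow(1)\Rightarrow(2)$ and $(3)\Rightarrow(4)$, but it does not actually establish the two implications you yourself single out as substantive, and the missing step is not a routine verification.

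In $(2)\Rightarrow(3)$ you build the bounded module map $\Phi=\Phi_{0}\circ P$ (with $P$ the projection onto $\overline{R(T^{\ast})}$) satisfying $\Phi T^{\ast}=(T')^{\ast}$, and then assert that ``one verifies'' that $\Phi$ is adjointable. Write out what adjointability of $\Phi$ at a point $g\in G$ requires: an element $h\in E$ with $\langle \Phi e,g\rangle_{\mathcal{A}}=\langle e,h\rangle_{\mathcal{A}}$ for all $e\in E$; testing against $e=T^{\ast}x$ forces $Th=T'g$, so the existence of $\Phi^{\ast}g$ is essentially equivalent to $T'g\in R(T)$, i.e.\ to statement $(4)$, which you have not yet derived from $(2)$. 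Conversely, your $(4)\Rightarrow(3)$ step produces the candidate $D$ and then says ``adjointability is obtained as before''--- but the candidate adjoint of $D$ is precisely the map $T^{\ast}x\mapsto(T')^{\ast}x$, whose well-definedness and boundedness are exactly condition $(2)$. The two hard implications thus lean on each other and neither is closed; orthogonal complementedness of $\overline{R(T^{\ast})}$ supplies the projection and the zero-extension but does not by itself ``pin down the adjoint''. To repair the argument you need either an independent proof that $(1)$ (or $(2)$) implies $(4)$, or a direct construction of the solution operator as in the cited source, e.g.\ showing that the regularized operators $D_{n}:=T^{\ast}\bigl(TT^{\ast}+\tfrac{1}{n}\mathrm{Id}\bigr)^{-1}T'$ form a norm-Cauchy sequence of adjointable operators (the Cauchy estimate coming from the inequality in $(1)$) whose limit $D$ satisfies $TD=T'$. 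As written, the proposal correctly locates where the content of the theorem lies but does not supply it.
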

\begin{definition}\label{haja14}  \cite{haj}
	Let $\mathcal{H}$ be a Hilbrt $\mathcal{A}$-module over a unital $C^{\ast}$-algebra, $C \in GL^{+}(\mathcal{H})$ and $K \in End_{\mathcal{A}}^{\ast}(\mathcal{H})$. A family $\{x_i\}_{i \in I}$ in $\mathcal{H}$ is said to be C-controlled K-frame if there exist two constants $0<A\leq B < \infty$ such that 
	$$A\langle C^{\frac{1}{2}}K^{\ast}x,C^{\frac{1}{2}}K^{\ast}x \rangle_{\mathcal{A}} \leq \sum_{i\in I} \langle x,x_i\rangle_{\mathcal{A}}  \langle Cx_i,x\rangle_{\mathcal{A}} \leq  B\langle x,x\rangle_{\mathcal{A}}, x\in \mathcal{H} $$
	
\end{definition}
\section{\textbf{Controlled K-operator frame for $End_\mathcal{A}^\ast (\mathcal{H})$}}
We begin this section with the following definitions.
\begin{definition}\cite{RR1}
	A family of adjointable operators $\{T_{i}\}_{i\in I}$ on a Hilbert $\mathcal{A}$-module $\mathcal{H}$ over a unital $C^{\ast}$-algebra is said to be an operator frame for $End_{\mathcal{A}}^{\ast}(\mathcal{H})$, if there exist two positives constants $A, B > 0$ such that 
	\begin{equation}\label{eq13}
	A\langle x,x\rangle_{\mathcal{A}}\leq\sum_{i\in I}\langle T_{i}x,T_{i}x\rangle_{\mathcal{A}}\leq B\langle x,x\rangle_{\mathcal{A}},  x\in\mathcal{H}.
	\end{equation}
	The numbers $A$ and $B$ are called lower and upper bound of the operator frame, respectively. If $A=B=\lambda$, the operator frame is called $\lambda$-tight.
	If $A = B = 1$, it is called a normalized tight operator frame or a Parseval operator frame.
	If only upper inequality of \eqref{eq13} hold, then $\{T_{i}\}_{i\in I}$ is called an operator Bessel sequence for $End_{\mathcal{A}}^{\ast}(\mathcal{H})$.\\	
	If the sum in the middle of \eqref{eq13} is convergent in norm, the operator frame is called standard.
\end{definition}
\begin{definition}\cite{art1}
	Let $K \in End_{\mathcal{A}}^{\ast}(\mathcal{H})$. A family of adjointable operators $\{T_{i}\}_{i\in I}$ on a Hilbert $\mathcal{A}$-module $\mathcal{H}$ over a unital $C^{\ast}$-algebra is said to be a  K-operator frame for $End_{\mathcal{A}}^{\ast}(\mathcal{H})$, if there exist two positives constants $A, B > 0$ such that 
	\begin{equation}\label{eq3}
	A\langle K^{\ast}x,K^{\ast}x\rangle_{\mathcal{A}}\leq\sum_{i\in I}\langle T_{i}x,T_{i}x\rangle_{\mathcal{A}}\leq B\langle x,x\rangle_{\mathcal{A}},  x\in\mathcal{H}.
	\end{equation}
	The numbers $A$ and $B$ are called lower and upper bound of the operator frame, respectively. If $A=B=\lambda$, the operator frame is called $\lambda$-tight.
	If $A = B = 1$, it is called a normalized tight operator frame or a Parseval operator frame.
	If only upper inequality of \eqref{eq3} hold, then $\{T_{i}\}_{i\in I}$ is called an operator Bessel sequence for $End_{\mathcal{A}}^{\ast}(\mathcal{H})$.\\	
	If the sum in the middle of \eqref{eq3} is convergent in norm, the K-operator frame is called standard.
\end{definition}
    Let $GL^{+}(\mathcal{H})$ be the set for all positive bounded linear invertible operators on $\mathcal{H}$ with bounded inverse.
\begin{definition}\cite{lab}
	Let $C,C^{'} \in GL^{+}(\mathcal{H})$, a family of adjointable operators $\{T_{i}\}_{i\in I}$ on a Hilbert $\mathcal{A}$-module $\mathcal{H}$ over a unital $C^{\ast}$-algebra is said to be a $(C,C^{'})$-controlled operator frame for $End_{\mathcal{A}}^{\ast}(\mathcal{H})$, if there exist two positive constants $A , B >0$ such that
	\begin{equation}\label{t1}
	A\langle x,x\rangle_{\mathcal{A}} \leq\sum_{i\in I}\langle T_{i}Cx,T_{i}C^{'}x\rangle_{\mathcal{A}}\leq B\langle x,x\rangle_{\mathcal{A}},  x\in\mathcal{H}.
	\end{equation}
	The numbers $A$ and $B$ are called lower and upper bounds of the $(C,C^{'})$-controlled operator frame , respectively.\\
	If $A=B=\lambda$, the $(C,C^{'})$-controlled operator frame  is called $\lambda$-tight.\\
	If $A = B = 1$, it is called a normalized tight $(C,C^{'})$-controlled operator frame or a Parseval $(C,C^{'})$-controlled operator frame .\\
	If only upper inequality of \eqref{t1} hold, then $\{T_{i}\}_{i\in i}$ is called a $(C,C^{'})$-controlled operator Bessel sequence for $End_{\mathcal{A}}^{\ast}(\mathcal{H})$.
\end{definition}
\begin{definition}
	Let $K \in End_{\mathcal{A}}^{\ast}(\mathcal{H})$ and  $C,C^{'} \in GL^{+}(\mathcal{H})$. A family of adjointable operators $\{T_{i}\}_{i\in I}$ on a Hilbert $\mathcal{A}$-module $\mathcal{H}$ over a unital $C^{\ast}$-algebra is said to be a $(C,C^{'})$-controlled K-operator frame for $End_{\mathcal{A}}^{\ast}(\mathcal{H})$, if there exist two positives constants $A , B >0$ such that
	\begin{equation}\label{1}
	A\langle K^{\ast}x,K^{\ast}x\rangle_{\mathcal{A}} \leq\sum_{i\in I}\langle T_{i}Cx,T_{i}C^{'}x\rangle_{\mathcal{A}}\leq B\langle x,x\rangle_{\mathcal{A}},  x\in\mathcal{H}.
	\end{equation}
	The numbers $A$ and $B$ are called lower and upper bounds of the $(C,C^{'})$-controlled K-operator frame , respectively.\\
	If $A=B=\lambda$, the $(C,C^{'})$-controlled K-operator frame  is called $\lambda$-tight.\\
	If $A = B = 1$, it is called a normalized tight $(C,C^{'})$-controlled K-operator frame or a Parseval $(C,C^{'})$-controlled operator frame .\\
	If only upper inequality of \eqref{1} hold, then $\{T_{i}\}_{i\in i}$ is called an $(C,C^{'})$-controlled K-operator Bessel sequence for $End_{\mathcal{A}}^{\ast}(\mathcal{H})$.
\end{definition}
\begin{example}
	Let $C \in GL^{+}(\mathcal{H})$, $K \in End_{\mathcal{A}}^{\ast}(\mathcal{H})$ and $\{x_{i}\}_{i\in I}$ be a $C$-controlled K-frame for $\mathcal{H}$.\\
	Let $(\Gamma_{i})_{i\in I} \in End_{\mathcal{A}}^{\ast}(\mathcal{H})$ such that :
	\begin{equation*}
\Gamma_{i}(x)=\langle x,x_{i}\rangle e_{i}, \qquad for\; all \quad i\in I \quad and \quad x\in \mathcal{H},
	\end{equation*}
	 where $\langle e_i,e_j\rangle_\mathcal{A} =\delta_{ij} 1_{\mathcal{A}}$.\\
	 Since $\{x_{i}\}_{i\in I}$ is a $C$-controlled K-frame for $\mathcal{H}$, then by
	 definition \ref{haja14} , there exist $A,B > 0$ such that :
	\begin{equation*}
	A\langle C^{\frac{1}{2}} K^{\ast} x,C^{\frac{1}{2}}K^{\ast}x\rangle_{\mathcal{A}} \leq \sum_{i\in I} \langle x,x_{i}\rangle_{\mathcal{A}} \langle C x_{i},x\rangle_{\mathcal{A}} \leq  B \langle x,x\rangle_{\mathcal{A}} \qquad x\in \mathcal{H}
	\end{equation*}
	Hence,
	\begin{equation*}
	A\langle C^{\frac{1}{2}} K^{\ast} x,C^{\frac{1}{2}}K^{\ast}x\rangle_{\mathcal{A}} \leq \sum_{i\in I} \langle x,x_{i}\rangle_{\mathcal{A}} \langle e_i,e_i\rangle_{\mathcal{A}} \langle C x_{i},x\rangle_{\mathcal{A}} \leq  B \langle x,x\rangle_{\mathcal{A}} \qquad x\in \mathcal{H}.
	\end{equation*}
	So,
	$$A\langle C^{\frac{1}{2}} K^{\ast} x,C^{\frac{1}{2}}K^{\ast}x\rangle_{\mathcal{A}} \leq \sum_{i\in I}  \langle\langle x,x_{i}\rangle_{\mathcal{A}} e_i,\langle x ,Cx_{i}\rangle_{\mathcal{A}}e_i\rangle_{\mathcal{A}}  \leq  B \langle x,x\rangle_{\mathcal{A}} \qquad x\in \mathcal{H}.$$
	Since C is a selfadjoint operator then,
	\begin{equation*}
	A\langle C^{\frac{1}{2}} K^{\ast} x,C^{\frac{1}{2}}K^{\ast}x\rangle_{\mathcal{A}} \leq \sum_{i\in I}  \langle\langle x,x_{i}\rangle_{\mathcal{A}} e_i,\langle Cx ,x_{i}\rangle_{\mathcal{A}}e_i\rangle_{\mathcal{A}}  \leq  B \langle x,x\rangle_{\mathcal{A}} \qquad x\in \mathcal{H}.
	\end{equation*}
	Therefore,
	\begin{equation*}
	A\langle C^{\frac{1}{2}} K^{\ast} x,C^{\frac{1}{2}}K^{\ast}x\rangle_{\mathcal{A}} \leq \sum_{i\in I}  \langle\Gamma_i x,\Gamma_i Cx\rangle_{\mathcal{A}}  \leq  B \langle x,x\rangle_{\mathcal{A}} \qquad x\in \mathcal{H}.
	\end{equation*}
	Since $C$ is a surjective operator, from lemma \ref{111}, there exists $m>0$, such that
	\begin{equation*}
m \langle K^{\ast} x,K^{\ast}x\rangle_{\mathcal{A}}\leq \langle C^{\frac{1}{2}} K^{\ast} x,C^{\frac{1}{2}}K^{\ast}x\rangle_{\mathcal{A}}.
	\end{equation*}
Then, 
\begin{equation*}
Am \langle K^{\ast} x,K^{\ast}x\rangle_{\mathcal{A}} \leq \sum_{i\in I}  \langle\Gamma_i x,\Gamma_i Cx\rangle_{\mathcal{A}}  \leq  B \langle x,x\rangle_{\mathcal{A}} \qquad x\in \mathcal{H}.
\end{equation*}
	Then $(\Gamma_{i})_{i\in I}$ is a $(Id_{\mathcal{H}},C)$-controlled $K$-operator frame for $End_{\mathcal{A}}^{\ast}(\mathcal{H})$.
\end{example}
\begin{proposition}
	Every $(C,C^{'})$-controlled operator frame for $End_{\mathcal{A}}^{\ast}(\mathcal{H})$ is a $(C,C^{'})$-controlled K-operator frame for $End_{\mathcal{A}}^{\ast}(\mathcal{H})$.
\end{proposition}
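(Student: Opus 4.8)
The plan is short: the upper bound appearing in the definition of a $(C,C^{'})$-controlled $K$-operator frame is \emph{verbatim} the upper bound of a $(C,C^{'})$-controlled operator frame, so nothing has to be done there; only the lower bound needs to be adjusted, and this is achieved by a single application of Lemma \ref{1}.

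First I would dispose of the degenerate case $K=0$: then $\langle K^{\ast}x,K^{\ast}x\rangle_{\mathcal{A}}=0$ for every $x\in\mathcal{H}$, so the lower inequality in the definition of a $(C,C^{'})$-controlled $K$-operator frame holds trivially (with any positive constant), and a $(C,C^{'})$-controlled operator frame is automatically a $(C,C^{'})$-controlled $K$-operator frame. Assume then $K\neq 0$. Applying Lemma \ref{1} to $K^{\ast}\in End_{\mathcal{A}}^{\ast}(\mathcal{H})$ gives
\[
\langle K^{\ast}x,K^{\ast}x\rangle_{\mathcal{A}}\le \|K^{\ast}\|^{2}\langle x,x\rangle_{\mathcal{A}}=\|K\|^{2}\langle x,x\rangle_{\mathcal{A}},\qquad x\in\mathcal{H},
\]
hence $\langle x,x\rangle_{\mathcal{A}}\ge \|K\|^{-2}\langle K^{\ast}x,K^{\ast}x\rangle_{\mathcal{A}}$ for all $x\in\mathcal{H}$.

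Now let $\{T_{i}\}_{i\in I}$ be a $(C,C^{'})$-controlled operator frame for $End_{\mathcal{A}}^{\ast}(\mathcal{H})$ with bounds $A,B>0$, so that \eqref{t1} holds. Chaining the two estimates gives, for every $x\in\mathcal{H}$,
\[
\frac{A}{\|K\|^{2}}\langle K^{\ast}x,K^{\ast}x\rangle_{\mathcal{A}}\le A\langle x,x\rangle_{\mathcal{A}}\le \sum_{i\in I}\langle T_{i}Cx,T_{i}C^{'}x\rangle_{\mathcal{A}}\le B\langle x,x\rangle_{\mathcal{A}}.
\]
This is exactly the defining inequality of a $(C,C^{'})$-controlled $K$-operator frame with lower bound $A\|K\|^{-2}$ and upper bound $B$, so $\{T_{i}\}_{i\in I}$ is a $(C,C^{'})$-controlled $K$-operator frame for $End_{\mathcal{A}}^{\ast}(\mathcal{H})$. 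Moreover norm-convergence of the middle sum is untouched, so standardness is preserved as well.

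There is essentially no real obstacle in this argument — it is a soft consequence of the operator-norm bound in Lemma \ref{1}. The only points requiring a bit of care are the degenerate case $K=0$ and keeping track of the bookkeeping: it is the \emph{lower} bound that gets divided by $\|K\|^{2}$, while the upper bound remains exactly the same.
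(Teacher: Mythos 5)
Your proof is correct and follows essentially the same route as the paper: apply Lemma \ref{1} to $K^{\ast}$ to get $\langle K^{\ast}x,K^{\ast}x\rangle_{\mathcal{A}}\leq\|K\|^{2}\langle x,x\rangle_{\mathcal{A}}$ and divide the lower frame bound by $\|K\|^{2}$, leaving the upper bound untouched. Your separate treatment of the degenerate case $K=0$ is a small extra care point the paper omits, but otherwise the arguments coincide.
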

\begin{proof}
	For any $K \in End_{\mathcal{A}}^{\ast}(\mathcal{H}) $, we have,
	$$\langle K^{\ast}x,K^{\ast}x\rangle_{\mathcal{A}} \leq \|K\|^2 \langle x,x\rangle_{\mathcal{A}}.$$
	Let $\{T_{i}\}_{i\in I}$ be a $(C,C^{'})$-controlled operator frame for $End_{\mathcal{A}}^{\ast}(\mathcal{H})$ with bounds A and B.\\
	Then, 
	$$A\langle x,x\rangle_{\mathcal{A}} \leq\sum_{i\in I}\langle T_{i}Cx,T_{i}C^{'}x\rangle_{\mathcal{A}}\leq B\langle x,x\rangle_{\mathcal{A}},  x\in\mathcal{H}.$$
	Hence, 
	$$A\|K\|^{-2} \langle K^{\ast}x,K^{\ast}x\rangle_{\mathcal{A}} \leq\sum_{i\in I}\langle T_{i}Cx,T_{i}C^{'}x\rangle_{\mathcal{A}}\leq B\langle x,x\rangle_{\mathcal{A}},  x\in\mathcal{H}.$$
	Therefore, $\{T_{i}\}_{i\in I}$  is a $(C,C^{'})$-controlled K-operator frame for $End_{\mathcal{A}}^{\ast}(\mathcal{H})$ with bounds $A\|K\|^{-2}$ and B.
	
\end{proof}

\begin{proposition}
	Let $\{T_{i}\}_{i\in I}$ be a $(C,C^{'})$-controlled K-operator frame for $End_{\mathcal{A}}^{\ast}(\mathcal{H})$. If K is surjective then $\{T_{i}\}_{i\in I}$ is a $(C,C^{'})$-controlled operator frame for $End_{\mathcal{A}}^{\ast}(\mathcal{H})$.

\end{proposition}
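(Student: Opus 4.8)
The plan is to exploit the surjectivity of $K$ via Lemma \ref{111} in order to absorb the factor $\langle K^{\ast}x,K^{\ast}x\rangle_{\mathcal{A}}$ appearing in the lower bound of the controlled $K$-operator frame inequality into an ordinary lower bound $\langle x,x\rangle_{\mathcal{A}}$. Concretely, since $K\in End_{\mathcal{A}}^{\ast}(\mathcal{H})$ is surjective, applying Lemma \ref{111} with $T=K$ (so that $T^{\ast}=K^{\ast}$) and using the equivalence (i) $\Leftrightarrow$ (iii) there, there exists a constant $m'>0$ such that
\begin{equation*}
\langle K^{\ast}x,K^{\ast}x\rangle_{\mathcal{A}}\geq m'\langle x,x\rangle_{\mathcal{A}},\qquad x\in\mathcal{H}.
\end{equation*}

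Next, let $A,B>0$ be the lower and upper bounds of the given $(C,C^{'})$-controlled $K$-operator frame, so that by \eqref{1},
\begin{equation*}
A\langle K^{\ast}x,K^{\ast}x\rangle_{\mathcal{A}}\leq\sum_{i\in I}\langle T_{i}Cx,T_{i}C^{'}x\rangle_{\mathcal{A}}\leq B\langle x,x\rangle_{\mathcal{A}},\qquad x\in\mathcal{H}.
\end{equation*}
Multiplying the inequality from Lemma \ref{111} by the positive scalar $A$ (which preserves the order on $\mathcal{A}$) gives $A\langle K^{\ast}x,K^{\ast}x\rangle_{\mathcal{A}}\geq Am'\langle x,x\rangle_{\mathcal{A}}$, and chaining this with the lower estimate above yields
\begin{equation*}
Am'\langle x,x\rangle_{\mathcal{A}}\leq\sum_{i\in I}\langle T_{i}Cx,T_{i}C^{'}x\rangle_{\mathcal{A}}\leq B\langle x,x\rangle_{\mathcal{A}},\qquad x\in\mathcal{H}.
\end{equation*}
Since $Am'>0$ and $B>0$, this is exactly the defining inequality \eqref{t1} of a $(C,C^{'})$-controlled operator frame, with lower bound $Am'$ and upper bound $B$; the upper bound and the convergence of the middle sum are inherited unchanged from the hypothesis.

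There is no real obstacle here: the only point requiring a little care is the correct invocation of Lemma \ref{111} — one must take $T=K$ so that the hypothesis ``$T$ surjective'' becomes ``$K$ surjective'' and the conclusion ``$T^{\ast}$ bounded below in the inner product'' becomes the needed estimate on $K^{\ast}$ — together with the elementary fact that multiplication by a positive real scalar respects the partial order of the $C^{\ast}$-algebra $\mathcal{A}$. Everything else is a direct substitution, so the proof is short.
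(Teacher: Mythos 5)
Your proof is correct and follows exactly the same route as the paper's: invoke Lemma \ref{111} with $T=K$ to get $\langle K^{\ast}x,K^{\ast}x\rangle_{\mathcal{A}}\geq m\langle x,x\rangle_{\mathcal{A}}$ from surjectivity, then chain this with the lower frame inequality to obtain the lower bound $Am$. No differences worth noting.
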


\begin{proof}
	Suppose that K is surjetive, from lemma \ref{111}  there exists $0<m$ such that
	
\begin{equation} \label{do11}
      \langle K^{\ast}x,K^{\ast}x\rangle_\mathcal{A}\geq m\langle x,x\rangle_\mathcal{A} , x\in\mathcal{H}
\end{equation}
   Let $\{T_{i}\}_{i\in I}$ be a $(C,C^{'})$-controlled K-operator frame for $End_{\mathcal{A}}^{\ast}(\mathcal{H})$ with bounds A and B.
    Hence, 
\begin{equation} \label{do12}
    A\langle K^{\ast}x,K^{\ast}x\rangle_{\mathcal{A}} \leq\sum_{i\in I}\langle T_{i}Cx,T_{i}C^{'}x\rangle_{\mathcal{A}}\leq B\langle x,x\rangle_{\mathcal{A}},  x\in\mathcal{H}.
\end{equation}
    Using \ref{do11}	and \ref{do12}, we have
    $$A m\langle x,x\rangle_{\mathcal{A}} \leq\sum_{i\in I}\langle T_{i}Cx,T_{i}C^{'}x\rangle_{\mathcal{A}}\leq B\langle x,x\rangle_{\mathcal{A}},  x\in\mathcal{H}.$$
    Therefore  $\{T_{i}\}_{i\in I}$ is a $(C,C^{'})$-controlled operator frame for $End_{\mathcal{A}}^{\ast}(\mathcal{H})$.
\end{proof}
\begin{proposition}
	Let  $C,C^{'} \in GL^{+}(\mathcal{H})$ and $\{T_{i}\}_{i\in I}$ be a  K-operator frame for $End_{\mathcal{A}}^{\ast}(\mathcal{H})$. Assume that $C$ and $C'$ commute with $T_i$ and K.  Then $\{T_{i}\}_{i\in I}$ is a $(C,C^{'})$-controlled K-operator frame for $End_{\mathcal{A}}^{\ast}(\mathcal{H})$.
	
\end{proposition}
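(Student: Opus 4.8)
The plan is to absorb both control operators into the argument of the frame inequality and then invoke the $K$-operator frame condition \eqref{eq3}. First I would observe that, since $C$ and $C'$ are positive and commute with each other, $CC'=C'C$ is positive and invertible, and I would set $S:=(CC')^{1/2}\in GL^{+}(\mathcal{H})$. Since any adjointable operator that commutes with a positive operator also commutes with its square root (uniformly approximate $t\mapsto\sqrt{t}$ on $[0,\|CC'\|]$ by polynomials vanishing at $0$), the hypotheses give that $S$ commutes with every $T_i$ and with $K$, hence with $K^{\ast}$ because $S=S^{\ast}$. Then, using $T_iC=CT_i$, $T_iC'=C'T_i$ and $C=C^{\ast}$, I would rewrite
$$\sum_{i\in I}\langle T_iCx,T_iC'x\rangle_{\mathcal{A}}=\sum_{i\in I}\langle CT_ix,C'T_ix\rangle_{\mathcal{A}}=\sum_{i\in I}\langle T_ix,CC'T_ix\rangle_{\mathcal{A}}=\sum_{i\in I}\langle T_iSx,T_iSx\rangle_{\mathcal{A}}$$
for every $x\in\mathcal{H}$.

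Next I would apply \eqref{eq3} with $x$ replaced by $Sx$ to get $A\langle K^{\ast}Sx,K^{\ast}Sx\rangle_{\mathcal{A}}\le\sum_{i\in I}\langle T_iSx,T_iSx\rangle_{\mathcal{A}}\le B\langle Sx,Sx\rangle_{\mathcal{A}}$, and then rewrite the left-hand side as $A\langle SK^{\ast}x,SK^{\ast}x\rangle_{\mathcal{A}}$ using $K^{\ast}S=SK^{\ast}$. For the lower bound I would note that $S$, being invertible, is surjective, so Lemma~\ref{111} supplies $m>0$ with $\langle Sy,Sy\rangle_{\mathcal{A}}\ge m\langle y,y\rangle_{\mathcal{A}}$ for all $y\in\mathcal{H}$; applied to $y=K^{\ast}x$ this yields $\langle SK^{\ast}x,SK^{\ast}x\rangle_{\mathcal{A}}\ge m\langle K^{\ast}x,K^{\ast}x\rangle_{\mathcal{A}}$. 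For the upper bound I would use Lemma~\ref{1} to get $\langle Sx,Sx\rangle_{\mathcal{A}}\le\|S\|^{2}\langle x,x\rangle_{\mathcal{A}}$. Putting the pieces together gives
$$Am\langle K^{\ast}x,K^{\ast}x\rangle_{\mathcal{A}}\le\sum_{i\in I}\langle T_iCx,T_iC'x\rangle_{\mathcal{A}}\le B\|S\|^{2}\langle x,x\rangle_{\mathcal{A}},$$
and the middle sum converges in norm since the one in \eqref{eq3} does and $S$ is bounded; hence $\{T_i\}_{i\in I}$ is a $(C,C')$-controlled $K$-operator frame for $End_{\mathcal{A}}^{\ast}(\mathcal{H})$ with bounds $Am$ and $B\|S\|^{2}$.

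The step I expect to be the main obstacle is justifying that $S=(CC')^{1/2}$ can be pulled through each $T_i$ and through $K$: this relies on $C$ and $C'$ commuting with one another (so that $CC'$ is positive and $S$ is well defined via continuous functional calculus) together with the elementary fact that commuting with a positive element forces commuting with its square root. Once that point is settled, the remainder is the routine substitution $x\mapsto Sx$ in \eqref{eq3} combined with the two standard module estimates of Lemmas~\ref{111} and \ref{1}; the exact frame bounds, the positivity of the middle term, and the norm convergence then follow mechanically.
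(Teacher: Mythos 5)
Your proposal is correct and follows essentially the same route as the paper: both rewrite $\sum_i\langle T_iCx,T_iC'x\rangle_{\mathcal{A}}$ as $\sum_i\langle T_i(CC')^{1/2}x,T_i(CC')^{1/2}x\rangle_{\mathcal{A}}$, apply the $K$-operator frame inequality at $(CC')^{1/2}x$, and then use the surjectivity of $(CC')^{1/2}$ (Lemma~\ref{111}) for the lower bound and the norm estimate of Lemma~\ref{1} for the upper bound, arriving at the same constants $Am$ and $B\|(CC')^{1/2}\|^{2}$. Your explicit justification that $(CC')^{1/2}$ commutes with $T_i$ and $K$ (via functional calculus) fills in a step the paper leaves implicit, but the argument is the same.
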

\begin{proof}
	Let $\{T_{i}\}_{i\in I}$ be a  K-operator frame for $End_{\mathcal{A}}^{\ast}(\mathcal{H})$.\\
	Then there exist $A, B > 0$ such that 
	\begin{equation}\label{eq33}
	A\langle K^{\ast}x,K^{\ast}x\rangle_{\mathcal{A}}\leq\sum_{i\in I}\langle T_{i}x,T_{i}x\rangle_{\mathcal{A}}\leq B\langle x,x\rangle_{\mathcal{A}},  x\in\mathcal{H}.
	\end{equation} 
	Since, 
\begin{align*}
    \sum_{i\in I}\langle T_{i}Cx,T_{i}C'x\rangle_{\mathcal{A}}&= \sum_{i\in I}\langle T_{i}(CC')^{\frac{1}{2}}x,T_{i}(CC')^{\frac{1}{2}}x\rangle_{\mathcal{A}}\\ 
    &\leq B\langle (CC')^{\frac{1}{2}} x,(CC')^{\frac{1}{2}} x\rangle_{\mathcal{A}}\\
    &\leq B \|(CC')^{\frac{1}{2}}\|^2 \langle x,x\rangle_{\mathcal{A}},
\end{align*}
   then, 
\begin{equation} \label{do23}
     \sum_{i\in I}\langle T_{i}Cx,T_{i}C'x\rangle_{\mathcal{A}}\leq B \|(CC')^{\frac{1}{2}}\|^2 \langle x,x\rangle_{\mathcal{A}}.
\end{equation}
    Moreover, 
\begin{align*}
    \sum_{i\in I}\langle T_{i}(CC')^{\frac{1}{2}}x,T_{i}(CC')^{\frac{1}{2}}x\rangle_{\mathcal{A}}
    & \geq 	A\langle K^{\ast}(CC')^{\frac{1}{2}}x,K^{\ast}(CC')^{\frac{1}{2}}x\rangle_{\mathcal{A}}\\ 
    &\geq 	A\langle (CC')^{\frac{1}{2}} K^{\ast}x,(CC')^{\frac{1}{2}}K^{\ast}x\rangle_{\mathcal{A}}.
\end{align*} 
    Since $(CC')^{\frac{1}{2}}$ is a surjecive operator, then there exists $m>0$ such that,
\begin{equation} \label{do24}
    \langle (CC')^{\frac{1}{2}} K^{\ast}x,(CC')^{\frac{1}{2}}K^{\ast}x\rangle_{\mathcal{A}} \geq m \langle  K^{\ast}x,K^{\ast}x\rangle_{\mathcal{A}}.
\end{equation} 
    From \ref{do23} and \ref{do24}, we have,
    $$ A m \langle  K^{\ast}x,K^{\ast}x\rangle_{\mathcal{A}} \leq \sum_{i\in I}\langle T_{i}Cx,T_{i}C'x\rangle_{\mathcal{A}}\leq B \|(CC')^{\frac{1}{2}}\|^2 \langle x,x\rangle_{\mathcal{A}}. $$ 
    Therefore $\{T_{i}\}_{i\in I}$ is a $(C,C^{'})$-controlled K-operator frame for $End_{\mathcal{A}}^{\ast}(\mathcal{H})$. 
	
\end{proof}
 Let $\{T_i\}_{i\in I}$ be a $(C,C')-$controlled Bessel K-operator frame for $End_\mathcal{A}^\ast (\mathcal{H})$. We assume that $C$ and $C'$ commute with $T_i$ and $T_i^{\ast}$.\\
    We define the operator $T_{(C,C')}$ :$ \mathcal{H} \rightarrow l^2(\mathcal{H})$
    by $$T_{(C,C')}x=\{T_i(CC')^\frac{1}{2}x\}_{i\in I}.$$
    There adjoint operator is defined by $T^{\ast}_{(C,C^{'})} : l^2(\mathcal{H})  \rightarrow \mathcal{H}$ given by, $$T^{\ast}_{(C,C^{'})}(\{a_i\}_{i \in I})=\sum_{i\in I} (CC')^\frac{1}{2}T_i^{\ast}a_i$$
    is called the synthesis operator.\\
    If $C$ and $C^{'}$ commute between them, and commute with the operators $T_i^{\ast}T_i$ for each $i \in I$. We define the $(C,C^{'})$-controlled Bessel K-operator frame  by:
 \begin{align*}
    S_{(C,C^{'})}:&\mathcal{H}\longrightarrow \mathcal{H}\\
    &x\longrightarrow S_{(C,C^{'})}x=T_{(C,C^{'})}T^{\ast}_{(C,C^{'})}x=\sum_{i\in I}  C'T_i^{\ast}T_iCx.
\end{align*}
     
    $T_{(C,C')}$ and  $T_{(C,C')}^\ast$ are called the synthesis and analysis operator of $(C,C')-$controlled Bessel K-operator frame $\{T_{i}\}_{i\in I}$ respectively.\\
    
    It's clear to see that $S_{(C,C^{'})}$ is positive, bounded and selfadjoint.
   
\begin{theorem}
	 Let $\{T_i\}_{i\in I}$ be a $(C,C')-$controlled Bessel K-operator frame for $End_\mathcal{A}^\ast (\mathcal{H})$. The following statements are equivalent:
\begin{itemize}
	\item [(1)] $\{T_i\}_{i\in I}$ is a $(C,C')-$controlled  K-operator frame.
	\item [(2)] There is $A>0$ such that $S_{(C,C')}\geq AKK^{\ast}$.
	\item [(3)] $K=S_{(C,C')}^\frac{1}{2}Q$, for some $Q \in End_\mathcal{A}^\ast (\mathcal{H})$.
\end{itemize}
	
\end{theorem}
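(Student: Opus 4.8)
The plan is to translate the two inequalities defining a $(C,C')$-controlled K-operator frame into a single operator inequality involving $S_{(C,C')}$ (which makes $(1)\Leftrightarrow(2)$ almost tautological once the Bessel hypothesis is used) and then to deduce $(2)\Leftrightarrow(3)$ from the Douglas-type factorization result Theorem \ref{3}.

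First I would rewrite the relevant $\mathcal{A}$-valued quadratic forms. From the defining formula $S_{(C,C')}x=\sum_{i\in I}C'T_i^\ast T_iCx$, the self-adjointness of $C'$, and continuity of the inner product, one gets, for every $x\in\mathcal{H}$,
\[
\langle S_{(C,C')}x,x\rangle_{\mathcal{A}}=\sum_{i\in I}\langle C'T_i^\ast T_iCx,x\rangle_{\mathcal{A}}=\sum_{i\in I}\langle T_iCx,T_iC'x\rangle_{\mathcal{A}},
\]
while $\langle K^\ast x,K^\ast x\rangle_{\mathcal{A}}=\langle KK^\ast x,x\rangle_{\mathcal{A}}$. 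Because $\{T_i\}_{i\in I}$ is assumed to be a $(C,C')$-controlled Bessel K-operator frame, the upper inequality $\sum_{i\in I}\langle T_iCx,T_iC'x\rangle_{\mathcal{A}}\leq B\langle x,x\rangle_{\mathcal{A}}$ already holds; hence condition $(1)$ is equivalent to the existence of $A>0$ with $A\langle KK^\ast x,x\rangle_{\mathcal{A}}\leq\langle S_{(C,C')}x,x\rangle_{\mathcal{A}}$ for all $x$. Since $S_{(C,C')}-AKK^\ast$ is self-adjoint and a self-adjoint adjointable operator is positive iff its quadratic form is, this is exactly $S_{(C,C')}\geq AKK^\ast$, i.e. $(2)$. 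This establishes $(1)\Leftrightarrow(2)$.

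For $(2)\Leftrightarrow(3)$ I would apply Theorem \ref{3} with $E=F=G=\mathcal{H}$, $T=S_{(C,C')}^{1/2}$ and $T'=K$. As $S_{(C,C')}^{1/2}$ is self-adjoint we have $TT^\ast=S_{(C,C')}$ and $T'(T')^\ast=KK^\ast$, so statement $(1)$ of Theorem \ref{3} reads $KK^\ast\leq\lambda S_{(C,C')}$ and is precisely our $(2)$ with $\lambda=A^{-1}$, whereas statement $(3)$ of Theorem \ref{3} furnishes $Q\in End_{\mathcal{A}}^\ast(\mathcal{H})$ with $K=S_{(C,C')}^{1/2}Q$, which is our $(3)$. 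For the easy implication $(3)\Rightarrow(2)$ one can argue directly: from $K=S_{(C,C')}^{1/2}Q$ and the standard estimate $QQ^\ast\leq\|Q\|^2\id$ one gets $\langle KK^\ast x,x\rangle_{\mathcal{A}}=\langle QQ^\ast S_{(C,C')}^{1/2}x,S_{(C,C')}^{1/2}x\rangle_{\mathcal{A}}\leq\|Q\|^2\langle S_{(C,C')}x,x\rangle_{\mathcal{A}}$, so $S_{(C,C')}\geq\|Q\|^{-2}KK^\ast$ (with $Q=0$ forcing $K=0$, a trivial case). The one delicate point, which I expect to be the main obstacle, is checking the hypothesis of Theorem \ref{3} that $\overline{R((S_{(C,C')}^{1/2})^\ast)}=\overline{R(S_{(C,C')}^{1/2})}$ is orthogonally complemented in $\mathcal{H}$: for a general positive adjointable operator on a Hilbert $C^\ast$-module the closure of the range need not be an orthogonal direct summand, so one must either assume this or derive it from extra structure (e.g. $S_{(C,C')}$ having closed range). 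Granting it, the chain $(1)\Leftrightarrow(2)\Leftrightarrow(3)$ finishes the proof.
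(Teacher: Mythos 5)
Your proof follows essentially the same route as the paper: $(1)\Leftrightarrow(2)$ by rewriting the frame inequalities as the quadratic-form inequality $A\langle KK^{\ast}x,x\rangle_{\mathcal{A}}\leq\langle S_{(C,C')}x,x\rangle_{\mathcal{A}}$, and $(2)\Leftrightarrow(3)$ via the Douglas-type result Theorem \ref{3} applied with $T=S_{(C,C')}^{1/2}$ and $T'=K$. You are in fact slightly more careful than the paper, which silently omits the hypothesis of Theorem \ref{3} that $\overline{R(S_{(C,C')}^{1/2})}$ be orthogonally complemented, and your direct verification of $(3)\Rightarrow(2)$ replaces the paper's second appeal to that theorem with an elementary estimate.
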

\begin{proof}
	$(1) \Longrightarrow (2)$\\
	Assume that $\{T_i\}_{i\in I}$ is a $(C,C')-$controlled  K-operator frame for $End_\mathcal{A}^\ast (\mathcal{H})$ with bounds A and B, with frame operator $S_{(C,C')}$, then
	$$A\langle K^{\ast}x,K^{\ast}x\rangle_{\mathcal{A}} \leq\sum_{i\in I}\langle T_{i}Cx,T_{i}C^{'}x\rangle_{\mathcal{A}}\leq B\langle x,x\rangle_{\mathcal{A}},  x\in\mathcal{H}.$$
    Therefore, 
    $$A\langle K K^{\ast}x,x\rangle_{\mathcal{A}} \leq\langle \sum_{i\in I} C'T_i^{\ast} T_{i}Cx,x\rangle_{\mathcal{A}}\leq B\langle x,x\rangle_{\mathcal{A}},  x\in\mathcal{H}.$$
    Hence, 
    $$S_{(C,C')}\geq AKK^{\ast}.$$
   	$(2) \Longrightarrow (3)$\\
   	Let $A>0$ such that $$S_{(C,C')}\geq AKK^{\ast}.$$
   	This give,
   	$$S_{(C,C')}^{\frac{1}{2}}S_{(C,C')}^{{\frac{1}{2}} \ast}\geq AKK^{\ast}.$$

   	From theorem \ref{3}, we have,
   	$$K=S_{(C,C')}^{\frac{1}{2}}Q$$
   	with $Q \in End_\mathcal{A}^\ast (\mathcal{H})$.\\
   	$(3) \Longrightarrow (1)$\\
   	Suppose that , $$K=S_{(C,C')}^{\frac{1}{2}}Q$$ for some $Q \in End_\mathcal{A}^\ast (\mathcal{H})$.\\
   	From theorem \ref{3} there exists $A>0$ such that, 
   	$$AKK^{\ast} \leq S_{(C,C')}^{\frac{1}{2}}S_{(C,C')}^{{\frac{1}{2}} \ast}.$$
   	Hence, $$ AKK^{\ast} \leq S_{(C,C')}.$$
   	Therefore,  $\{T_i\}_{i\in I}$ is a $(C,C')-$controlled  K-operator frame for $End_\mathcal{A}^\ast (\mathcal{H})$.\\
   	\end{proof}   	
\begin{theorem}
	Let $K,Q \in End_\mathcal{A}^\ast (\mathcal{H})$ and $\{T_i\}_{i\in I}$ be a $(C,C')-$controlled  K-operator frame for $End_\mathcal{A}^\ast (\mathcal{H})$.
	Suppose that $Q$ commute with $C$ , $C'$ and $K$. Then $\{T_iQ\}_{i\in I}$ is a $(C,C')-$controlled  $Q^{\ast}K$-operator frame for $End_\mathcal{A}^\ast (\mathcal{H})$.
	
\end{theorem}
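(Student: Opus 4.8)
The plan is to obtain the result by a single substitution: apply the defining inequality of the given $(C,C')$-controlled $K$-operator frame to the vector $Qx$ instead of $x$. First I would record the easy structural points: each $T_iQ$ is a composition of adjointable operators, hence $T_iQ\in End_{\mathcal{A}}^{\ast}(\mathcal{H})$; and since $\{T_i\}_{i\in I}$ is a $(C,C')$-controlled $K$-operator frame the sum in the middle of \eqref{1} is defined for every argument, so in particular $\sum_{i\in I}\langle (T_iQ)Cx,(T_iQ)C'x\rangle_{\mathcal{A}}$ makes sense (and is norm-convergent whenever the original frame is standard).

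Next comes the computation. Fix $x\in\mathcal{H}$. Since $Q$ commutes with $C$ and with $C'$, we have $(T_iQ)Cx=T_iC(Qx)$ and $(T_iQ)C'x=T_iC'(Qx)$ for every $i\in I$, whence
\begin{equation*}
\sum_{i\in I}\langle (T_iQ)Cx,(T_iQ)C'x\rangle_{\mathcal{A}}=\sum_{i\in I}\langle T_iC(Qx),T_iC'(Qx)\rangle_{\mathcal{A}}.
\end{equation*}
Applying the defining inequality \eqref{1} of the $(C,C')$-controlled $K$-operator frame $\{T_i\}_{i\in I}$, with bounds $A$ and $B$, to the element $Qx$ yields
\begin{equation*}
A\langle K^{\ast}(Qx),K^{\ast}(Qx)\rangle_{\mathcal{A}}\le\sum_{i\in I}\langle T_iC(Qx),T_iC'(Qx)\rangle_{\mathcal{A}}\le B\langle Qx,Qx\rangle_{\mathcal{A}}.
\end{equation*}
Now I would rewrite the outer terms: since $(Q^{\ast}K)^{\ast}=K^{\ast}Q$, the left-hand side equals $A\langle (Q^{\ast}K)^{\ast}x,(Q^{\ast}K)^{\ast}x\rangle_{\mathcal{A}}$; and the norm inequality $\langle Qx,Qx\rangle_{\mathcal{A}}\le\|Q\|^{2}\langle x,x\rangle_{\mathcal{A}}$, valid for any $Q\in End_{\mathcal{A}}^{\ast}(\mathcal{H})$, bounds the right-hand side by $B\|Q\|^{2}\langle x,x\rangle_{\mathcal{A}}$. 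Combining, for all $x\in\mathcal{H}$,
\begin{equation*}
A\langle (Q^{\ast}K)^{\ast}x,(Q^{\ast}K)^{\ast}x\rangle_{\mathcal{A}}\le\sum_{i\in I}\langle (T_iQ)Cx,(T_iQ)C'x\rangle_{\mathcal{A}}\le B\|Q\|^{2}\langle x,x\rangle_{\mathcal{A}},
\end{equation*}
so $\{T_iQ\}_{i\in I}$ is a $(C,C')$-controlled $Q^{\ast}K$-operator frame with lower bound $A$ and upper bound $B\|Q\|^{2}$ (assuming $Q\ne0$; if $Q=0$ the statement is vacuous).

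I do not expect any real obstacle here — the whole proof is the substitution $x\mapsto Qx$ together with the identity $(Q^{\ast}K)^{\ast}=K^{\ast}Q$. The only points needing care are the bookkeeping of operator order in $(T_iQ)Cx$ versus $T_iC(Qx)$ (which is exactly where commutativity of $Q$ with $C$ and $C'$ is used), and correctly reading off which operator appears in the lower bound, namely $Q^{\ast}K$ rather than $KQ$. It is worth remarking that the hypothesis that $Q$ commutes with $K$ is in fact not needed in this argument; commutativity with $C$ and $C'$ is what does the work.
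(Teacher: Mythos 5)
Your proof is correct and follows essentially the same route as the paper's: substitute $Qx$ into the defining inequality, use commutativity of $Q$ with $C$ and $C'$ to reorder, identify $(Q^{\ast}K)^{\ast}=K^{\ast}Q$, and bound $\langle Qx,Qx\rangle_{\mathcal{A}}\leq\|Q\|^{2}\langle x,x\rangle_{\mathcal{A}}$. Your closing observation that commutativity of $Q$ with $K$ is never actually used is also accurate of the paper's own argument.
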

\begin{proof}
	Suppose that $\{T_i\}_{i\in I}$  is a $(C,C')-$controlled  K-operator frame with frame bounds A and B. Then, $$A\langle K^{\ast}x,K^{\ast}x\rangle_{\mathcal{A}} \leq\sum_{i\in I}\langle T_{i}Cx,T_{i}C^{'}x\rangle_{\mathcal{A}}\leq B\langle x,x\rangle_{\mathcal{A}},  x\in\mathcal{H}.$$
	Hence, 
	$$A\langle K^{\ast}Qx,K^{\ast}Qx\rangle_{\mathcal{A}} \leq\sum_{i\in I}\langle T_{i}CQx,T_{i}C^{'}Qx\rangle_{\mathcal{A}}\leq B\langle Qx,Qx\rangle_{\mathcal{A}},  x\in\mathcal{H}.$$
	So, 
	$$A\langle (Q^{\ast}K)^{\ast}x,(Q^{\ast}K)^{\ast}x\rangle_{\mathcal{A}} \leq\sum_{i\in I}\langle T_{i}QCx,T_{i}QC^{'}x\rangle_{\mathcal{A}}\leq B \|Q\|^2\langle x,x\rangle_{\mathcal{A}},  x\in\mathcal{H}.$$
	Therefore $\{T_iQ\}_{i\in I}$ is a $(C,C')$-controlled  $Q^{\ast}K$-operator frame for $End_\mathcal{A}^\ast (\mathcal{H})$ with bounds A and $B \|Q\|^2$.
	
\end{proof}
\begin{theorem}
	Let $K \in End_\mathcal{A}^\ast (\mathcal{H})$ and $\{T_i\}_{i\in I}$ be a $(C,C')$-controlled tight K-operator frame for $End_\mathcal{A}^\ast (\mathcal{H})$ with bound $A_1$.
	Then  $\{T_i\}_{i\in I}$  is a $(C,C')$-controlled tight operator frame for $End_\mathcal{A}^\ast (\mathcal{H})$
	with bound $A_2$ if and only if $K_r^{-1}=\frac{A_1}{A_2} K^\ast$.
	
\end{theorem}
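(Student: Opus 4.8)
The plan is to reduce the stated equivalence to a single operator identity between $KK^{\ast}$ and the identity on $\mathcal{H}$, obtained by comparing the two tight expansions of the frame sum $\sum_{i\in I}\langle T_iCx,T_iC'x\rangle_{\mathcal{A}}$.

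First I would record what each hypothesis says as an \emph{exact} equality. Since $\{T_i\}_{i\in I}$ is a $(C,C')$-controlled tight $K$-operator frame with bound $A_1$, we have
$$\sum_{i\in I}\langle T_iCx,T_iC'x\rangle_{\mathcal{A}}=A_1\langle K^{\ast}x,K^{\ast}x\rangle_{\mathcal{A}},\qquad x\in\mathcal{H},$$
while $\{T_i\}_{i\in I}$ being a $(C,C')$-controlled tight operator frame with bound $A_2$ means
$$\sum_{i\in I}\langle T_iCx,T_iC'x\rangle_{\mathcal{A}}=A_2\langle x,x\rangle_{\mathcal{A}},\qquad x\in\mathcal{H}.$$
The middle sum is the same object in both lines (it does not involve $K$), so the two conditions hold simultaneously precisely when $A_1\langle K^{\ast}x,K^{\ast}x\rangle_{\mathcal{A}}=A_2\langle x,x\rangle_{\mathcal{A}}$ for every $x\in\mathcal{H}$, equivalently $\langle (A_1KK^{\ast}-A_2\,\mathrm{Id}_{\mathcal{H}})x,x\rangle_{\mathcal{A}}=0$ for every $x$, using $\langle K^{\ast}x,K^{\ast}x\rangle_{\mathcal{A}}=\langle KK^{\ast}x,x\rangle_{\mathcal{A}}$.

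Next I would invoke the standard fact that an adjointable operator $S$ on a Hilbert $\mathcal{A}$-module with $\langle Sx,x\rangle_{\mathcal{A}}=0$ for all $x$ is the zero operator: by polarization one gets $\langle Sx,y\rangle_{\mathcal{A}}=0$ for all $x,y$, and taking $y=Sx$ forces $Sx=0$. The operator $S=A_1KK^{\ast}-A_2\,\mathrm{Id}_{\mathcal{H}}$ is adjointable and self-adjoint (since $(KK^{\ast})^{\ast}=KK^{\ast}$), so the condition above is equivalent to $A_1KK^{\ast}=A_2\,\mathrm{Id}_{\mathcal{H}}$, that is $K\big(\frac{A_1}{A_2}K^{\ast}\big)=\mathrm{Id}_{\mathcal{H}}$, which is exactly the assertion that the right inverse of $K$ is $K_r^{-1}=\frac{A_1}{A_2}K^{\ast}$. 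This gives the forward implication. For the converse, if $K_r^{-1}=\frac{A_1}{A_2}K^{\ast}$, i.e. $\frac{A_1}{A_2}KK^{\ast}=\mathrm{Id}_{\mathcal{H}}$, then $A_1\langle K^{\ast}x,K^{\ast}x\rangle_{\mathcal{A}}=A_1\langle KK^{\ast}x,x\rangle_{\mathcal{A}}=A_2\langle x,x\rangle_{\mathcal{A}}$, and substituting into the tight $K$-operator frame identity yields $\sum_{i\in I}\langle T_iCx,T_iC'x\rangle_{\mathcal{A}}=A_2\langle x,x\rangle_{\mathcal{A}}$, so $\{T_i\}_{i\in I}$ is a $(C,C')$-controlled tight operator frame with bound $A_2$.

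Once the two tightness conditions are written as genuine equalities the whole argument is bookkeeping; the only point requiring a little care is the implication ``$\langle Sx,x\rangle_{\mathcal{A}}=0$ for all $x$'' $\Rightarrow$ ``$S=0$'' in the Hilbert $C^{\ast}$-module setting, together with checking that $A_1KK^{\ast}-A_2\,\mathrm{Id}_{\mathcal{H}}$ is self-adjoint so that this fact applies cleanly. I do not expect any deeper obstacle, although one should also make explicit the convention that ``tight with bound $\lambda$'' is read as the equality $\sum_{i\in I}\langle T_iCx,T_iC'x\rangle_{\mathcal{A}}=\lambda\langle K^{\ast}x,K^{\ast}x\rangle_{\mathcal{A}}$ (respectively $=\lambda\langle x,x\rangle_{\mathcal{A}}$) rather than merely the two-sided estimate with $A=B=\lambda$.
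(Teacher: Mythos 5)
Your proof is correct and follows essentially the same route as the paper's: equate the two exact tightness identities, deduce $A_1KK^{\ast}=A_2\,\mathrm{Id}_{\mathcal{H}}$, and read off the right inverse $K_r^{-1}=\frac{A_1}{A_2}K^{\ast}$, with the converse by direct substitution. You are in fact slightly more careful than the paper, which passes from $\langle KK^{\ast}x,x\rangle_{\mathcal{A}}=\frac{A_2}{A_1}\langle x,x\rangle_{\mathcal{A}}$ to the operator equality without comment, whereas you justify it by the polarization argument in the Hilbert $C^{\ast}$-module setting.
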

\begin{proof}
	Let $\{T_i\}_{i\in I}$ be a $(C,C')-$controlled tight K-operator frame for $End_\mathcal{A}^\ast (\mathcal{H})$ with bound $A_1$.\\
	Assume that $\{T_i\}_{i\in I}$ is a $(C,C')-$controlled tight operator frame for $End_\mathcal{A}^\ast (\mathcal{H})$ with bound $A_2$. Then, 
	$$\sum_{i\in I}\langle T_{i}Cx,T_{i}C^{'}x\rangle_{\mathcal{A}}=A_2\langle x,x\rangle_{\mathcal{A}},  x\in\mathcal{H}.$$
	Since $\{T_i\}_{i\in I}$ is a $(C,C')-$controlled tight K-operator frame for $End_\mathcal{A}^\ast (\mathcal{H})$, then we have, 
	$$A_1\langle K^{\ast}x,K^{\ast}x  \rangle_{\mathcal{A}} =\sum_{i\in I}\langle T_{i}Cx,T_{i}C^{'}x\rangle_{\mathcal{A}}.$$
	Hence, 
	$$A_1\langle K^{\ast}x,K^{\ast}x  \rangle_{\mathcal{A}} = A_2\langle x,x\rangle_{\mathcal{A}}.$$
	So, 
    $$\langle KK^{\ast}x,x  \rangle_{\mathcal{A}} =\langle \frac{A_2}{A_1} x,x\rangle_{\mathcal{A}},  x\in\mathcal{H}.$$
    Then, 
    $$KK^{\ast} =  \frac{A_2}{A_1}Id_{\mathcal{A}}.$$
    Therefore, $$K_r^{-1}=\frac{A_1}{A_2} K^{\ast}.$$
    For the converse, assume that  $$K_r^{-1}=\frac{A_1}{A_2} K^{\ast}.$$
    Then, $$KK^{\ast} =  \frac{A_2}{A_1}Id_{\mathcal{A}}.$$ 
    This give that, $$\langle KK^{\ast}x,x  \rangle =\langle \frac{A_2}{A_1} x,x\rangle_{\mathcal{A}},  x\in\mathcal{H}.$$
    Since $\{T_i\}_{i\in I}$ is a $(C,C')-$controlled tight K-operator frame for $End_\mathcal{A}^\ast (\mathcal{H})$ with bound $A_1$, the we have,
   $$\sum_{i\in I}\langle T_{i}Cx,T_{i}C^{'}x\rangle_{\mathcal{A}}=A_2\langle x,x\rangle_{\mathcal{A}},  x\in\mathcal{H}.$$ 
   Therefore  $\{T_i\}_{i\in I}$ is a $(C,C')-$controlled tight operator frame for $End_\mathcal{A}^\ast (\mathcal{H})$. 
\end{proof}
\begin{corollary}
	Let $K \in End_\mathcal{A}^\ast (\mathcal{H})$ and  $\{T_i\}_{i\in I}$ be a sequence for $End_\mathcal{A}^\ast (\mathcal{H})$. Then those statements are true,\\
	(1) If $\{T_i\}_{i\in I}$ is a $(C,C')-$controlled  tight K-operator frame for $End_\mathcal{A}^\ast (\mathcal{H})$, then $\{T_i(K^n)^{\ast}\}_{i\in I}$ is a $(C,C')-$controlled  tight $K^{n+1}$-operator  frame for $End_\mathcal{A}^\ast (\mathcal{H})$ .\\
	(2) If $\{T_i\}_{i\in I}$ is a $(C,C')-$controlled  tight operator frame for $End_\mathcal{A}^\ast (\mathcal{H})$ then $\{T_iK^{\ast}\}_{i\in I}$ is a $(C,C')-$controlled  tight K-operator frame for $End_\mathcal{A}^\ast (\mathcal{H})$.
\end{corollary}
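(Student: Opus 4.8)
The plan is to read off both assertions directly from the \emph{equalities} defining tight controlled (K-)operator frames, so that no induction on $n$ is really needed: the exponent in part (1) only reshuffles a composition of powers of $K$. Throughout I use (as in the preceding theorem on $\{T_iQ\}_{i\in I}$) the standing assumption that $K$, and hence every power $K^{n}$ together with its adjoint $(K^{n})^{\ast}$, commutes with the control operators $C$ and $C'$; this is precisely what allows one to push $(K^{n})^{\ast}$ past $C$ and $C'$ inside the frame sum.

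For part (1), assume $\{T_i\}_{i\in I}$ is a $(C,C')$-controlled tight $K$-operator frame with bound $A_1$, so that $\sum_{i\in I}\langle T_iCx,T_iC'x\rangle_{\mathcal{A}}=A_1\langle K^{\ast}x,K^{\ast}x\rangle_{\mathcal{A}}$ for every $x\in\mathcal{H}$. Fix $x\in\mathcal{H}$ and apply this identity with $(K^{n})^{\ast}x$ in place of $x$; using $T_i(K^{n})^{\ast}C=T_iC(K^{n})^{\ast}$ and $T_i(K^{n})^{\ast}C'=T_iC'(K^{n})^{\ast}$, the left-hand side becomes $\sum_{i\in I}\langle T_i(K^{n})^{\ast}Cx,T_i(K^{n})^{\ast}C'x\rangle_{\mathcal{A}}$, while the right-hand side is $A_1\langle K^{\ast}(K^{n})^{\ast}x,K^{\ast}(K^{n})^{\ast}x\rangle_{\mathcal{A}}$. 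Since $K^{\ast}(K^{n})^{\ast}=(K^{n}K)^{\ast}=(K^{n+1})^{\ast}$, this equals $A_1\langle (K^{n+1})^{\ast}x,(K^{n+1})^{\ast}x\rangle_{\mathcal{A}}$, which is exactly the defining equality of a $(C,C')$-controlled tight $K^{n+1}$-operator frame with the same bound $A_1$. One could equivalently invoke the previous theorem with $Q=(K^{n})^{\ast}$ and observe that in the tight case its two inequalities collapse to a single equality; I would still record the short computation for transparency.

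For part (2), run the same manipulation with $K$ replaced by $Id_{\mathcal{H}}$ on the hypothesis side: if $\{T_i\}_{i\in I}$ is a $(C,C')$-controlled tight operator frame with bound $\lambda$, i.e. $\sum_{i\in I}\langle T_iCx,T_iC'x\rangle_{\mathcal{A}}=\lambda\langle x,x\rangle_{\mathcal{A}}$, then substituting $K^{\ast}x$ for $x$ and commuting $K^{\ast}$ past $C$ and $C'$ yields $\sum_{i\in I}\langle T_iK^{\ast}Cx,T_iK^{\ast}C'x\rangle_{\mathcal{A}}=\lambda\langle K^{\ast}x,K^{\ast}x\rangle_{\mathcal{A}}$, which says precisely that $\{T_iK^{\ast}\}_{i\in I}$ is a $(C,C')$-controlled tight $K$-operator frame with bound $\lambda$.

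There is no genuine obstacle here; the only points demanding attention are the order in the adjoint identity $(K^{n}K)^{\ast}=K^{\ast}(K^{n})^{\ast}=(K^{n+1})^{\ast}$ and making explicit the commutativity hypothesis relating $K$, $C$ and $C'$, without which the rearrangement inside the sum is not licensed and the statement would need that hypothesis added.
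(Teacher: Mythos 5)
Your proof is correct and is essentially the intended argument: the paper states this corollary without proof, as a specialization of the preceding theorem with $Q=(K^{n})^{\ast}$ (so that $Q^{\ast}K=K^{n+1}$) in part (1) and $Q=K^{\ast}$ with $K=Id_{\mathcal{H}}$ as the starting operator in part (2), and your direct substitution of $(K^{n})^{\ast}x$ (resp. $K^{\ast}x$) into the tight equality is exactly how that theorem's proof runs. You also rightly flag two points the paper glosses over: the commutation of $K$ (hence of $(K^{n})^{\ast}$) with $C$ and $C'$ is needed but not stated in the corollary, and a verbatim application of the preceding theorem would only yield bounds $A$ and $B\|Q\|^{2}$, so the equality computation you record is genuinely required to preserve tightness.
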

\begin{theorem}
	Let $\{T_i\}_{i\in I}$ be a $(C,C')-$controlled  K-operator frame for $End_\mathcal{A}^\ast (\mathcal{H})$ with best frame bounds A and B. If  $Q : \mathcal{H}  \rightarrow \mathcal{H}$
	is an adjointable and invertible operator such that $Q^{-1}$ commutes with $K^{\ast}$, then $\{T_iQ\}_{i\in I}$ is a $(C,C')-$controlled  K-operator frame for $End_\mathcal{A}^\ast (\mathcal{H})$ with best frame bounds M and N satisfying the inequalities,

\begin{equation}\label{eq10}
A\|Q^{-1}\|^{-2}\leq M \leq A\|Q\|^2 \;\;\;\;\;\; and \;\;\;\;\;\; A\|Q^{-1}\|^{-2}\leq N \leq B\|Q\|^2.
\end{equation}

\end{theorem}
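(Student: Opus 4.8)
The plan is to transfer the frame inequalities of $\{T_i\}_{i\in I}$ to $\{T_iQ\}_{i\in I}$ by the substitution $x\mapsto Qx$, and then to pin down the best bounds $M,N$ by running the very same estimate once more with $Q^{-1}$ in place of $Q$.

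First I would unpack the hypothesis. From $Q^{-1}K^{\ast}=K^{\ast}Q^{-1}$, multiplying on both sides by $Q$ gives $K^{\ast}Q=QK^{\ast}$, so $Q$ commutes with $K^{\ast}$ as well. I will also use that $Q$ commutes with $C$ and $C'$ (this parallels the hypothesis of the previous theorem; if it is not part of the standing assumptions of the section it should be added explicitly) and that both $Q$ and $Q^{-1}$ obey $\langle Tx,Tx\rangle_{\mathcal{A}}\le\|T\|^{2}\langle x,x\rangle_{\mathcal{A}}$ (Lemma \ref{1}). Now, starting from $A\langle K^{\ast}x,K^{\ast}x\rangle_{\mathcal{A}}\le\sum_{i\in I}\langle T_iCx,T_iC'x\rangle_{\mathcal{A}}\le B\langle x,x\rangle_{\mathcal{A}}$ and replacing $x$ by $Qx$, the commutation relations turn this into
$$A\langle QK^{\ast}x,QK^{\ast}x\rangle_{\mathcal{A}}\le\sum_{i\in I}\langle (T_iQ)Cx,(T_iQ)C'x\rangle_{\mathcal{A}}\le B\langle Qx,Qx\rangle_{\mathcal{A}}.$$
On the right, $\langle Qx,Qx\rangle_{\mathcal{A}}\le\|Q\|^{2}\langle x,x\rangle_{\mathcal{A}}$; on the left, applying Lemma \ref{1} to $Q^{-1}$ at the vector $QK^{\ast}x$ gives $\langle K^{\ast}x,K^{\ast}x\rangle_{\mathcal{A}}\le\|Q^{-1}\|^{2}\langle QK^{\ast}x,QK^{\ast}x\rangle_{\mathcal{A}}$, i.e. $\langle QK^{\ast}x,QK^{\ast}x\rangle_{\mathcal{A}}\ge\|Q^{-1}\|^{-2}\langle K^{\ast}x,K^{\ast}x\rangle_{\mathcal{A}}$. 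Hence $\{T_iQ\}_{i\in I}$ is a $(C,C')$-controlled $K$-operator frame with bounds $A\|Q^{-1}\|^{-2}$ and $B\|Q\|^{2}$, so its best bounds satisfy $M\ge A\|Q^{-1}\|^{-2}$ and $N\le B\|Q\|^{2}$.

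For the two remaining inequalities I would apply exactly this argument to the frame $\{T_iQ\}_{i\in I}$ with $Q^{-1}$ playing the role of $Q$ — which is legitimate since $Q^{-1}$ is adjointable and invertible and its inverse $Q$ commutes with $K^{\ast}$. It shows that $\{(T_iQ)Q^{-1}\}_{i\in I}=\{T_i\}_{i\in I}$ is a $(C,C')$-controlled $K$-operator frame with lower bound $M\|Q\|^{-2}$ and upper bound $N\|Q^{-1}\|^{2}$. Because $A$ and $B$ are the \emph{best} bounds of $\{T_i\}_{i\in I}$, we get $M\|Q\|^{-2}\le A$ and $B\le N\|Q^{-1}\|^{2}$, i.e. $M\le A\|Q\|^{2}$ and $N\ge B\|Q^{-1}\|^{-2}$; since $A\le B$, also $N\ge A\|Q^{-1}\|^{-2}$. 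Combining this with the previous paragraph yields \eqref{eq10}.

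The main obstacle is conceptual rather than computational: handling the word "best" correctly in the reverse step. Applying the forward estimate to $\{T_iQ\}_{i\in I}$ with $Q^{-1}$ yields only \emph{some} pair of valid bounds for $\{T_i\}_{i\in I}$, and one must then invoke maximality of $A$ (resp. minimality of $B$) among all lower (resp. upper) bounds to close the loop. A secondary point to be careful about is the commutation of $Q$ with $C$ and $C'$, which is what allows $Q$ to be pulled through $C$ and $C'$ after the substitution $x\mapsto Qx$; this should either be included in the hypotheses (as in the preceding theorem) or justified from the standing assumptions on $C$ and $C'$.
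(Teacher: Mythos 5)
Your proof is correct and follows essentially the same route as the paper: substitute $x\mapsto Qx$, use the commutation relations together with the bound $\langle Tx,Tx\rangle_{\mathcal{A}}\leq\|T\|^{2}\langle x,x\rangle_{\mathcal{A}}$ applied to $Q$ and $Q^{-1}$ to get the bounds $A\|Q^{-1}\|^{-2}$ and $B\|Q\|^{2}$ for $\{T_iQ\}_{i\in I}$, then run the reverse estimate with $Q^{-1}$ and invoke optimality of $A$ and $B$. Your remark that the commutation of $Q$ with $C$ and $C'$ needs to be assumed is accurate — the paper uses it silently when rewriting $T_iCQx$ as $T_iQCx$.
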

\begin{proof}
	Let $\{T_i\}_{i\in I}$ be a $(C,C')-$controlled  K-operator frame for $End_\mathcal{A}^\ast (\mathcal{H})$ with best frame bounds A and B.
	Then, $$\sum_{i\in I}\langle T_{i}CQx,T_{i}C^{'}Qx\rangle_{\mathcal{A}}\leq B\langle Qx,Qx\rangle_{\mathcal{A}}\leq B \|Q\|^2\langle x,x\rangle_{\mathcal{A}}. $$
	Also we have,
\begin{align*}
    	A\langle K^{\ast}x,K^{\ast}x\rangle_{\mathcal{A}}&=A\langle K^{\ast}Q^{-1}Qx,K^{\ast}Q^{-1}Qx\rangle_{\mathcal{A}}\\
    	&=A\langle Q^{-1} K^{\ast}Qx,Q^{-1} K^{\ast}Qx\rangle_{\mathcal{A}}\\
    	&\leq \| Q^{-1}\|^2 \sum_{i\in I}\langle T_{i}CQx,T_{i}C^{'}Qx\rangle_{\mathcal{A}}\\
    	&= \| Q^{-1}\|^2 \sum_{i\in I}\langle T_{i}QCx,T_{i}QC^{'}x\rangle_{\mathcal{A}}.
\end{align*}
       Hence, 
       $$A\| Q^{-1}\|^{-2}\langle K^{\ast}x,K^{\ast}x\rangle_{\mathcal{A}}\leq \sum_{i\in I}\langle T_{i}QCx,T_{i}QC^{'}x\rangle_{\mathcal{A}}\leq B \|Q\|^2\langle x,x\rangle_{\mathcal{A}}.$$
       Therefore, $\{T_iQ\}_{i\in I}$ is a $(C,C')-$controlled  K-operator frame for $End_\mathcal{A}^\ast (\mathcal{H})$ with bounds $A\| Q^{-1}\|^{-2}$ and $ B \|Q\|^2$.\\
       Now let M and N be the best bounds of the $(C,C')-$controlled  K-operator frame $\{T_iQ\}_{i\in I}$. Then,
       
\begin{equation}\label{eq8}
       A\|Q^{-1}\|^{-2} \leq M\;\;\;\;\;\; and \;\;\;\;\;\;N\leq B \|Q\|^2 .
\end{equation}
    
       Also, $\{T_{i}Q\}_{i\in I}$ is a $(C,C')-$controlled $K$-operator frame for $End_{\mathcal{A}}^{\ast}(\mathcal{H})$ with frame bounds $M$ and $N$, and
       $$\langle K^{\ast}x,K^{\ast}x\rangle_{\mathcal{A}} =\langle QQ^{-1}K^{\ast}x,QQ^{-1}K^{\ast}x\rangle_{\mathcal{A}} \leq\|Q\|^{2}\langle K^{\ast}Q^{-1}x,K^{\ast}Q^{-1}x\rangle_{\mathcal{A}} , x\in\mathcal{H}.$$
      Hence
      \begin{align*}
      M\|Q\|^{-2}\langle K^{\ast}x,K^{\ast}x\rangle_{\mathcal{A}} &\leq M\langle K^{\ast}Q^{-1}x,K^{\ast}Q^{-1}x\rangle_{\mathcal{A}} \\
      &\leq\sum_{i\in\mathbb{J}}\langle T_{i}QCQ^{-1}x,T_{i}QC'Q^{-1}x\rangle_{\mathcal{A}} \\
      &\leq\sum_{i\in\mathbb{J}}\langle T_{i}QQ^{-1}Cx,T_{i}QQ^{-1}C'x\rangle_{\mathcal{A}} \\
      &=\sum_{i\in I}\langle T_{i}Cx,T_{i}C^{'}x\rangle_{\mathcal{A}} \\
      &\leq N\|Q^{-1}\|^{2}\langle x,x\rangle_{\mathcal{A}} .
      \end{align*}
      Since A and B are the best bounds of $(C,C')-$controlled  K-operator frame $\{T_i\}_{i\in I}$, we have 
      \begin{equation}\label{eq9}
      C\|Q\|^{-2}\leq A \;\;\;\;\;\; and \;\;\;\;\;\; B\leq D\|Q^{-1}\|^{2}.
      \end{equation}
      Therfore the inequality \ref{eq10} follows from \ref{eq9}   and \ref{eq8}.
\end{proof}
\bibliographystyle{amsplain}

\vspace{0.1in}

\end{document}